\theoremstyle{plain} % ,,Styl'' twierdzeń zwykły (wytłuszczony tytuł,
\newtheorem{tw}{Theorem}[section]	%section oznacza numerowanie w ramach rozdziałów
\newtheorem{pr}{Problem}
\newtheorem{corollary}{Corollary}[section]
\theoremstyle{definition} % Przełączamy styl na ,,definicyjny'': tytuł
\newtheorem{remark}{Remark}[section]
\newcommand\cA{{\mathcal A}}
\newcommand\cF{{\mathcal F}}
\newcommand\cS{{\mathcal S}}
\newcommand\cM{{\mathcal M}}
\newcommand\fS{{\textfrak{S}}}
\newcommand\bIs{\mathbf{I}_{\rS}}
\newcommand\bI{\mathbf{I}}
\newcommand{\rS}{\mathrm{S}}
\newcommand{\rM}{\mathrm{M}}
\newcommand{\rC}{\mathrm{C}}
\renewcommand\ge{\geqslant}
\renewcommand\le{\leqslant}
\newcommand{\mI}[1]{\mathbbm{1}_{#1}}
\begin{document}
\begin{center} %The solution
{\LARGE On some properties of seminormed fuzzy integrals}
\end{center}

\begin{center}
Michał Boczek
\footnote{Corresponding author. E-mail adress: 800401@edu.p.lodz.pl; %tel.: +48 42 6313859; fax.: +48 42 6363114.
}, 
Marek Kaluszka
\footnote{E-mail adress: kaluszka@p.lodz.pl; tel.: +48 42 6313859; fax.: +48 42 6363114.}
\\
{\emph{\small{Institute of Mathematics, Lodz University of Technology, 90-924 Lodz, Poland}}}
\end{center}

%\begin{center}
%Michał Boczek, Marek Kaluszka
%{\emph{\small{Institute of Mathematics, Lodz University of Technology, 90-924 Lodz, Poland}}}
%\end{center}

\begin{abstract} 
We give solutions to  Problems $2.21$, $2.31$ and $2.32$,  which were posed 
Borzowá-Molnárová,  Hal\v{c}inová and Hutník in [{\it The smallest semicopula-based universal integrals I: properties and characterizations,} Fuzzy Sets and Systems (2014),  http://dx.doi.org/
10.1016/j.fss. 2014.09.0232014].
\end{abstract}

{\it Keywords: }{Sugeno integral; Seminorm; Capacity; Monotone measure, Semicopula; Fuzzy integrals.}

\section{Introduction}
Let $(X,\cA)$ be a~measurable space, where $\cA$ is a~$\sigma$-algebra of subsets of a non-empty set $X,$ and let $\cS$ be the family of all measurable spaces. The~class of all $\cA$-measurable functions $f\colon X\to [0,1]$ is denoted by $\cF_{(X,\cA)}.$ A~{\it capacity} on $\cA$  is a~non-decreasing set function 
$\mu\colon \cA\to [0,1]$ with $\mu(\emptyset)=0$ and  $\mu(X)=1.$
We denote by $\cM_{(X,\cA)}$ the class of all 
capacities  on $\cA.$  

Suppose that $\rS\colon [0,1]^2\to [0,1]$ is a~semicopula (also called 
a~$t$-{\it seminorm}), i.e., a~non-decreasing function in both coordinates with the neutral element equal to $1.$  It is clear that $\rS(x,y)\le x\wedge y$ 
and $\rS(x,0)=0=\rS(0,x)$ for all $x,y\in [0,1],$ where $x\wedge y=\min(x,y)$ (see $\cite{bas,dur,klement2}$).  We denote the class of all semicopulas
by $\fS$. 

There are three important examples of semicopulas: $\rM,$ $\Pi$ and $\rS_L,$ where $\rM(a,b)=a\wedge b,$ $\Pi(a,b)=ab$ and $\rS_L(a,b)=(a+b-1)\vee 0;$
$\rS_L$ is called the {\it Łukasiewicz t-norm}  $\cite{lukasiewicz}.$ Hereafter, 
$a\vee b=\max(a,b)$.
A~function  $\rS^*\colon [0,1]^2\to [0,1]$ 
is called a~{\it $t$-coseminorm} if  $\rS^*(x,y)=1-\rS(1-x,1-y)$ for some 
semicopula $\rS$.

A generalized Sugeno integral is defined by
\begin{align*}
\bIs(\mu,f):=\sup_{t\in [0,1]} \rS\Big(t,\mu\big(\lbrace f\ge t\rbrace \big)\Big),
 \end{align*} 
where $\lbrace f\ge t\rbrace=\lbrace x\in X\colon f(x)\ge t\rbrace,$  $(X,\cA)\in \cS$ and $(\mu, f)\in \cM_{(X,\cA)}\times \cF_{(X,\cA)}.$ In the literature the functional $\bIs$ is also called {\it seminormed fuzzy integral} $\cite{suarez,klement3,ouyang3}.$
Replacing semicopula $\rS$ with $\rM$, we get the {\it Sugeno integral} $\cite{sugeno1}$. Moreover, if  $\rS=\Pi,$ then $\bI_{\Pi}$ is called the {\it Shilkret integral} $\cite{shilkret}.$

The paper is devoted to the solution 
of Problems $2.21$, $2.31$ and $2.32$,  which were posed by Borzowá-Molnárová,  Hal\v{c}inová and Hutník $\cite{hutnik}$
(see also $\cite{mesiar11}$). In Section 2 we  present our main results and some related results. In the next section we give conditions equivalent to comonotone $\circ$-additivity of the generalized Sugeno integral $\bIs.$

 \section{Weak subadditivity of integral $\bIs$}
In this section we present necessary and sufficient conditions for validity of the inequality 
\begin{align}\label{m1}
  \bIs\big(\mu,f+a\big)\le \bIs \big(\mu,f\big)+a,
  \end{align}
 where $f+a\in [0,1].$ 
Borzowá-Molnárová {\it et al.} $\cite{hutnik}$ proposed a sufficient condition only, which turns out to be too restrictive.

\begin{tw}
The inequality $\eqref{m1}$ is satisfied for all $a\in [0,1]$  and $(\mu, f)\in\cM_{(X,\cA)}\times\cF_{(X,\cA)}$ such that $f+a\in [0,1]$ iff
\begin{align}\label{m0}
\rS(c+a,b)\le \rS(c,b)+a
\end{align}
 for all $a,b,c\in [0,1]$ such that $a+c\in [0,1].$ 
\end{tw}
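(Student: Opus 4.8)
The plan is to prove both implications of the iff. I will establish the equivalence between the integral inequality \eqref{m1} holding for all $(\mu,f)$ and the pointwise functional inequality \eqref{m0} on the semicopula $\rS$.

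For the \emph{sufficiency} direction (assuming \eqref{m0}, deriving \eqref{m1}), I would start from the definition $\bIs(\mu,f+a)=\sup_{t\in[0,1]}\rS\big(t,\mu(\{f+a\ge t\})\big)$ and try to bound each term in the supremum. The key observation is that for $t\in[a,1]$ we have $\{f+a\ge t\}=\{f\ge t-a\}$, so writing $c=t-a$ turns the term into $\rS(c+a,\mu(\{f\ge c\}))$. Applying \eqref{m0} with $b=\mu(\{f\ge c\})$ bounds this by $\rS(c,\mu(\{f\ge c\}))+a\le \bIs(\mu,f)+a$. For $t\in[0,a]$ I would use that $\rS(t,\cdot)\le t\le a\le \bIs(\mu,f)+a$, using the elementary bound $\rS(x,y)\le x\wedge y$ stated in the excerpt. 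Taking the supremum over all $t$ then yields \eqref{m1}.

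For the \emph{necessity} direction (assuming \eqref{m1} for all $(\mu,f)$, deriving \eqref{m0}), the natural strategy is to choose a specific measurable space, capacity $\mu$, and function $f$ that isolate the inequality \eqref{m0} at prescribed values $a,b,c$. Given $a,b,c$ with $a+c\in[0,1]$, I would take a two-point (or even simpler) space, let $f$ be a constant or simple function taking value $c$ on a set $A$ with $\mu(A)=b$ (and suitably small elsewhere), so that $\bIs(\mu,f)$ reduces to $\rS(c,b)$ and $\bIs(\mu,f+a)$ reduces to $\rS(c+a,b)$. Substituting into \eqref{m1} then gives exactly \eqref{m0}. The main care here is engineering $f$ and $\mu$ so that no \emph{other} level $t$ in the supremum defining $\bIs(\mu,f)$ exceeds $\rS(c,b)$, and symmetrically for $f+a$; a clean choice is $f=c\,\mI{A}$ with $\mu$ concentrated appropriately, checking the supremum is attained at the intended level.

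I expect the main obstacle to lie in the necessity direction, specifically in verifying that the constructed example genuinely computes $\bIs$ as $\rS(c,b)$ rather than some larger value coming from a competing threshold $t$. Because $\rS$ is only assumed non-decreasing with neutral element $1$ (not continuous), one must argue carefully that the supremum over $t\in[0,1]$ of $\rS(t,\mu(\{f\ge t\}))$ is controlled; here the monotonicity of both $\rS$ and the level-set map $t\mapsto\mu(\{f\ge t\})$ should make the relevant supremum piecewise-constant and hence easy to evaluate on a simple $f$. Once the example is correctly set up, the algebra reducing \eqref{m1} to \eqref{m0} is immediate.
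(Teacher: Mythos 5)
Your proposal is correct and follows essentially the same route as the paper: sufficiency by the substitution $t\mapsto t-a$ (the paper rewrites the supremum as $\sup_{t\in [0,1-a]}\rS\big(t+a,\mu(\lbrace f\ge t\rbrace)\big)$ and applies \eqref{m0} termwise), and necessity via a test pair whose level-set measures are $1$, $b$, $0$ --- your choice $f=c\,\mI{A}$ with $\mu(A)=b$ on a two-point space produces exactly the same profile as the paper's $f(x)=cx$ on $[0,1]$ with the capacity equal to $b$ on all nontrivial sets. If anything, your computation $\bIs(\mu,f+a)=a\vee\rS(c+a,b)\le \rS(c,b)+a$ reaches \eqref{m0} slightly more directly than the paper, which passes through $a\vee\rS(c+a,b)\le a\vee\big(\rS(c,b)+a\big)$ and then eliminates the spurious alternative by contradiction.
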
 

 \begin{proof}
 We first prove that the inequality $\eqref{m1}$ holds true under the assumption 
 (\ref{m0}).  The inequality can be rewritten  as follows
  \begin{align}\label{m1a}
 \sup_{t\in [0,1]} \rS\Big(t,\mu\big(\lbrace f+a\ge t\rbrace \big)\Big)\le \sup_{t\in [0,1]} \rS\Big(t,\mu\big(\lbrace f\ge t\rbrace \big)\Big)+a.
 \end{align}
The inequality $\eqref{m1a}$ is obvious for $a\in \{0,1\}$, so we assume that $a\in (0,1).$ Observe that 
$\mu\big(\lbrace f+a\ge t\rbrace \big)=1$ for $t\in [0,a].$ 
Since $\rS(a,1)=a$ for all $a$,  the left-hand side of $\eqref{m1a}$ takes the form
\begin{align}\label{e2}
\sup_{t\in [0,1]} \rS\Big(t,\mu\big(\lbrace f+a\ge t\rbrace \big)\Big)&=\max\Big[ \sup_{t\in [0,a]} \rS(t,1), \sup_{t\in (a,1]} \rS\Big(t,\mu\big(\lbrace f+a\ge t\rbrace \big)\Big)\Big]\nonumber\\&=a\vee \sup_{t\in (0,1-a]} \rS\Big(t+a,\mu\big(\lbrace f\ge t\rbrace \big)\Big)\nonumber\\&=\sup_{t\in [0,1-a]} \rS\Big(t+a,\mu\big(\lbrace f\ge t\rbrace \big)\Big).
\end{align}
 From $\eqref{m1a}$ and $\eqref{e2}$ we get
 \begin{align}\label{m2}
 \sup_{t\in [0,1-a]} \rS\Big(t+a,\mu\big(\lbrace f\ge t\rbrace \big)\Big)\le \sup_{t\in [0,1-a]} \rS\Big(t,\mu\big(\lbrace f\ge t\rbrace \big)\Big)+a,
 \end{align}
 because $\lbrace f\ge t\rbrace=\emptyset$ for $t>a$ and $\rS(b,0)=0$ for all $b.$ Let $f(x)=cx,$ $x\in [0,1],$ and $\mu(A)=b$ for all $A\notin \{\emptyset,X\},$ where $0< b< 1,$  $0<c\le 1-a.$ Thus 
\begin{align*}
 \sup_{t\in [0,1-a]} \rS\Big(t+a,\mu\big(\lbrace f\ge t\rbrace \big)\Big)&=a\vee \big(\rS(c+a,b)\big),\\
 \sup_{t\in [0,1-a]} \rS\Big(t,\mu\big(\lbrace f\ge t\rbrace \big)\Big)+a&=\rS(c,b)+a=a\vee \big(\rS(c,b)+a\big).
 \end{align*} 
Combining this with $\eqref{m2}$ we obtain
 \begin{align}\label{m3}
 a\vee \big(\rS(c+a,b)\big)\le a\vee \big(\rS(c,b)+a\big)
 \end{align}
 for all $(a,b,c)\in\rC,$ as $\rC=\lbrace (a,b,c)\colon 0<a<1,\, 0< b< 1,\, 0<c\le 1-a \rbrace.$ 
 
 It is obvious that $x\vee y \le x\vee z$ implies ($y\le z$) or $(x\ge y>z)$ for all $x,y,z$, so from (\ref{m3}) it follows that
\begin{align}\label{m2a}
\bigl(\rS(c+a,b)\le \rS(c,b)+a\bigr)\quad  \textrm{or}\quad \big( a\ge \rS(c+a,b) \quad \textrm{and}\quad \rS(c+a,b)>\rS(c,b)+a\big).
\end{align}
 Hence we get
\begin{align}\label{m3a}
 \rS(c+a,b)\le \rS(c,b)+a,
 \end{align} 
 for all $a,b,c\in \rC,$ as the second sentence in $\eqref{m2a}$ leads to a~contradiction. Let us notice that $\eqref{m3a}$ also holds for $a=0,$ $a=1$ and $c=0,$ as well as $b=0,$ $b=1.$ 
Now we show that if $\rS(c+a,b)\le \rS(c,b)+a$, then inequality $\eqref{m1}$ is satisfied for all $(\mu, f)\in\cM_{(X,\cA)}\times\cF_{(X,\cA)}$ such that $f+a\in[0,1].$ Applying $\eqref{m0}$ we obtain
  \begin{align*}
  \bIs(\mu,f+a)=\sup_{t\in [0,1-a]} \rS\Big(t+a,\mu\big(\lbrace f\ge t\rbrace \big)\Big)\le \sup_{t\in [0,1-a]} \rS\Big(t,\mu\big(\lbrace f\ge t\rbrace \big)\Big)+a=\bIs(\mu,f)+a.
  \end{align*}
  The proof is complete.
 \end{proof}

\begin{remark}
From $\eqref{m0}$ it follows that $\rS_L(x,y)\le \rS(x,y)$ for all $x,y\in [0,1];$ to see this  put $c+a=1$ in $\eqref{m0}$.
\end{remark}
 
We show that the sufficient condition guaranteeing validity of  $\eqref{m1}$ (see the inequality (3) in Borzowá-Molnárová 
{\it et al.} $\cite{hutnik}$) is also necessary but for 
a slightly different problem (see Corollary $\ref{c1}\,(a)$ below).
 
 \begin{tw}\label{tw3} The inequality  \begin{align}\label{s2}
  \bI_{\rS_1}\big(\mu,(f+a)\mI{A}\big)\le \bI_{\rS_2}\big(\mu,f\mI{A}\big)+\bI_{\rS_3}\big(\mu,a\mI{A}\big)
  \end{align}
holds for all $a\in [0,1]$, 
$\rS_i\in\fS,$  $A\in\cA,$ and $(\mu, f)\in \cM_{(X,\cA)}\times \cF_{(X,\cA)}$ such that  $f+a\in [0,1]$ iff the inequality
 \begin{align}\label{s1}
  \rS_1(x+y,z)\le \rS_2(x,z)+\rS_3(y,z)
  \end{align} 
  is satisfied for all $x,y,z\in [0,1]$ such that $x+y\in [0,1].$ 
 
 \end{tw}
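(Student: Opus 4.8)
The plan is to collapse both sides of \eqref{s2} to suprema taken over the common family of level-set measures $\mu\big(A\cap\{f\ge s\}\big)$, and then to recognise \eqref{s1} as the resulting pointwise comparison. First I would rewrite the three integrals by exploiting the indicator structure. On $A$ the function $(f+a)\mI{A}$ equals $f+a\in[a,1]$ and off $A$ it vanishes, so for $t>a$ its level set is $A\cap\{f\ge t-a\}$ while for $t\in(0,a]$ it equals $A$; substituting $s=t-a$ and noting that the block $t\in(0,a]$ contributes exactly $\rS_1(a,\mu(A))$, which is recaptured by the endpoint $s=0$ (where $A\cap\{f\ge 0\}=A$), one reaches the first identity below. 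Since $f+a\le 1$ forces $f\le 1-a$, the sets $\{f\ge s\}$ are empty for $s>1-a$, and after discarding null contributions via $\rS_i(s,0)=0$ all three integrals are governed by the index set $[0,1-a]$:
\begin{align}
\bI_{\rS_1}\big(\mu,(f+a)\mI{A}\big)&=\sup_{s\in[0,1-a]}\rS_1\Big(s+a,\mu\big(A\cap\{f\ge s\}\big)\Big),\label{pid1}\\
\bI_{\rS_2}\big(\mu,f\mI{A}\big)&=\sup_{s\in[0,1-a]}\rS_2\Big(s,\mu\big(A\cap\{f\ge s\}\big)\Big),\label{pid2}\\
\bI_{\rS_3}\big(\mu,a\mI{A}\big)&=\rS_3\big(a,\mu(A)\big),\label{pid3}
\end{align}
the last because $a\mI{A}$ is constant on $A$ and $\rS_3$ is non-decreasing in its first argument.

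For sufficiency I would assume \eqref{s1} and abbreviate $z_s:=\mu\big(A\cap\{f\ge s\}\big)$. For each $s\in[0,1-a]$ the triple $(s,a,z_s)$ is admissible in \eqref{s1} (indeed $s+a\le 1$), so $\rS_1(s+a,z_s)\le\rS_2(s,z_s)+\rS_3(a,z_s)$; monotonicity of $\mu$ gives $z_s\le\mu(A)$ and hence $\rS_3(a,z_s)\le\rS_3(a,\mu(A))$ by monotonicity of $\rS_3$ in its second argument. Thus $\rS_1(s+a,z_s)\le\rS_2(s,z_s)+\rS_3(a,\mu(A))$ for every such $s$, and taking the supremum over $s\in[0,1-a]$ together with \eqref{pid1}--\eqref{pid3} yields \eqref{s2}.

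For necessity I would assume \eqref{s2} for all admissible data, fix $x,y,z\in[0,1]$ with $x+y\in[0,1]$, and test it on a tailored example: take $X=\{1,2\}$ with $\cA=2^X$, put $A=\{1\}$, set $a=y$ and $f=x\,\mI{A}$ (admissible since $x+y\le 1$), and choose the capacity with $\mu(A)=z$. Then $\mu\big(A\cap\{f\ge s\}\big)=z$ for $s\in[0,x]$ and $=0$ for $s\in(x,1-y]$, so monotonicity of $\rS_1$ and $\rS_2$ in the first coordinate collapses the suprema in \eqref{pid1}--\eqref{pid2} to their values at $s=x$, giving $\bI_{\rS_1}(\mu,(f+a)\mI{A})=\rS_1(x+y,z)$, $\bI_{\rS_2}(\mu,f\mI{A})=\rS_2(x,z)$ and $\bI_{\rS_3}(\mu,a\mI{A})=\rS_3(y,z)$. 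With these values \eqref{s2} becomes precisely \eqref{s1}; since the construction obeys the capacity axioms and $f+a\in[0,1]$ for every $z\in[0,1]$, the boundary values of $z$ and the cases $x+y\in\{0,1\}$ need no separate argument.

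The step I expect to be the main obstacle is the derivation of \eqref{pid1}: one must split the level sets of $(f+a)\mI{A}$ at the threshold $t=a$, justify the reindexing $s=t-a$, and verify that the discarded block $t\in(0,a]$ is faithfully absorbed by the endpoint $s=0$. The companion observation that $f\le 1-a$ truncates the index set to $[0,1-a]$ is what forces the suprema in \eqref{pid1} and \eqref{pid2} to run over the same set, which is exactly what the termwise comparison in the sufficiency direction requires.
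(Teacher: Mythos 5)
Your proof is correct and takes essentially the same route as the paper's: for sufficiency, the paper likewise splits the supremum at $t=a$, reindexes via $s=t-a$ to obtain $\sup_{s\in[0,1-a]}\rS_1\big(s+a,\mu(A\cap\{f\ge s\})\big)$, and applies \eqref{s1} pointwise together with $\rS_3\big(a,\mu(A\cap\{f\ge s\})\big)\le \bI_{\rS_3}(\mu,a\mI{A})$; for necessity, the paper's one-line test $f=b\mI{A}$ is exactly your choice $f=x\mI{A}$, $a=y$, $\mu(A)=z$, which you merely spell out on a concrete two-point space.
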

 
\begin{proof}
We claim that  $\eqref{s2}$ follows from $\eqref{s1}$. Indeed 
\begin{align*}
\sup_{t\in [0,1]} \rS_1\Big(t,\mu\big(A\cap\lbrace f+a\ge t\rbrace &\big)\Big)=\sup_{t\in [0,a]} \rS_1\big(t,\mu(A)\big)\vee\sup_{t\in (a,1]} \rS_1\Big(t,\mu\big(A\cap\lbrace f+a\ge t\rbrace \big)\Big)\\&=\rS_1\big(a,\mu(A)\big)\vee \sup_{t\in(0,1-a]} \rS_1 \Big(t+a,\mu\big(\lbrace f\ge t\rbrace \big)\Big)\\&=\sup_{t\in [0,1-a]} \rS_1 \Big(t+a,\mu\big(\lbrace f\ge t\rbrace \big)\Big)\\&\le \sup_{t\in [0,1-a]} \bigg(\rS_2\Big(t,\mu\big(A\cap\lbrace f\ge t\rbrace \big)\Big)+ \rS_3\Big(a,\mu\big(A\cap\lbrace f\ge t\rbrace \big)\Big)\bigg)\\&\le\bI_{\rS_2}\big(\mu,f\mI{A}\big)+\bI_{\rS_3}\big(\mu,a\mI{A}\big).
\end{align*}
Putting $f=b\mI{A}\in [0,1]$ in $\eqref{s2}$, we obtain  $\eqref{s1},$ which completes the proof.
\end{proof}

Applying  Theorem $\ref{tw3}$ to $\rS_1=\rS_2$ and $\rS_3=\rS$, $\rS_3=\wedge$ or $\rS_3=\Pi$ we get
 \begin{corollary}\label{c1}
 \begin{itemize}
 \item[(a)] $\rS(x+y,z)\le \rS(x,z)+\rS(y,z)$ for all $x,y,z$ iff for all $a,f,A,\mu$ 
 \begin{align*}
 \bIs\big(\mu,(f+a)\mI{A}\big)\le \bIs\big(\mu,f\mI{A}\big)+\bIs\big(\mu,a\mI{A}\big),
 \end{align*}
 \item[(b)]  $\rS(x+y,z)\le \rS(x,z)+(y\wedge z)$ for all $x,y,z$ iff for all $a,f,A,\mu$
  \begin{align*}
 \quad \bIs\big(\mu,(f+a)\mI{A}\big)\le \bIs\big(\mu,f\mI{A}\big)+\big(a\wedge \mu(A)\big),
 \end{align*}
 \item[(c)] $\rS(x+y,z)\le \rS(x,z)+yz$ for all $x,y,z$ iff for all $a,f,A,\mu$
  \begin{align*}
 \bIs\big(\mu,(f+a)\mI{A}\big)\le \bIs\big(\mu,f\mI{A}\big)+a\mu(A).
 \end{align*}
 \end{itemize}
 \end{corollary}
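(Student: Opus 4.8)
The plan is to read off all three parts as direct specializations of Theorem~\ref{tw3}, taking $\rS_1=\rS_2=\rS$ throughout and letting $\rS_3$ run over $\rS$, $\rM=\wedge$ and $\Pi$. Since the equivalence in Theorem~\ref{tw3} is established there for a fixed triple $(\rS_1,\rS_2,\rS_3)$ of semicopulas (its forward implication and the substitution $f=b\mI{A}$ in the converse both keep the $\rS_i$ fixed), no new integral argument is needed; the only genuine task is to rewrite the last summand $\bI_{\rS_3}(\mu,a\mI{A})$ of $\eqref{s2}$ in closed form for each choice of $\rS_3$.

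To that end I would first record the elementary evaluation $\bI_{\rS_3}(\mu,a\mI{A})=\rS_3(a,\mu(A))$, valid for every semicopula $\rS_3$, every $a\in[0,1]$ and every $A\in\cA$. This follows from the level-set description $\lbrace a\mI{A}\ge t\rbrace=A$ for $t\in(0,a]$ and $\lbrace a\mI{A}\ge t\rbrace=\emptyset$ for $t>a$, together with $\rS_3(0,\cdot)=0=\rS_3(\cdot,0)$, which kills the contributions at $t=0$ and at $t>a$; monotonicity of $\rS_3$ in its first argument then gives $\sup_{t\in(0,a]}\rS_3(t,\mu(A))=\rS_3(a,\mu(A))$. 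Specializing, $\bI_{\rM}(\mu,a\mI{A})=\rM(a,\mu(A))=a\wedge\mu(A)$ and $\bI_{\Pi}(\mu,a\mI{A})=\Pi(a,\mu(A))=a\mu(A)$.

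With this in hand the three cases follow by substitution. For $\rS_3=\rS$ the condition $\eqref{s1}$ reads $\rS(x+y,z)\le\rS(x,z)+\rS(y,z)$ and $\eqref{s2}$ is exactly the inequality in part (a) (here the last term is simply left in the form $\bIs(\mu,a\mI{A})$). For $\rS_3=\rM$ the condition becomes $\rS(x+y,z)\le\rS(x,z)+(y\wedge z)$ and, using $\bI_{\rM}(\mu,a\mI{A})=a\wedge\mu(A)$, the integral inequality turns into the one in part (b). For $\rS_3=\Pi$ the condition becomes $\rS(x+y,z)\le\rS(x,z)+yz$ and, using $\bI_{\Pi}(\mu,a\mI{A})=a\mu(A)$, we obtain part (c). In each case the ``iff'' is inherited verbatim from Theorem~\ref{tw3}.

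Given that Theorem~\ref{tw3} does all the heavy lifting, I do not expect a serious obstacle. The one point demanding care is justifying that the equivalence of Theorem~\ref{tw3} may be invoked for a single fixed triple of semicopulas rather than only under a blanket quantifier over all triples; this is legitimate because its proof never varies the $\rS_i$. The remaining step, the closed-form evaluation of $\bI_{\rS_3}(\mu,a\mI{A})$, is routine but is the only computation that actually has to be carried out.
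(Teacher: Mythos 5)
Your proposal is correct and follows exactly the paper's route: the paper proves the corollary by the single remark ``Applying Theorem~\ref{tw3} to $\rS_1=\rS_2$ and $\rS_3=\rS$, $\rS_3=\wedge$ or $\rS_3=\Pi$,'' which is precisely your specialization, with the paper's proof of Theorem~\ref{tw3} indeed fixing the triple $(\rS_1,\rS_2,\rS_3)$ as you require. Your explicit evaluation $\bI_{\rS_3}\big(\mu,a\mI{A}\big)=\rS_3\big(a,\mu(A)\big)$ is the small computation the paper leaves implicit, and it is carried out correctly.
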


 \section{Some other properties of integral $\bIs$}
In this section, the solution to  Problems $2.31$ and $2.32$ of Borzowá-Molnárová,  Hal\v{c}inová and Hutník   $\cite{hutnik}$ is provided. 
We say that $f,g\colon X\to [0,1]$ are {\it comonotone} on $A\in\cF,$ if $\big(f(x)-f(y)\big)\big(g(x)-g(y)\big)\ge 0$ for all $x,y\in A.$ Clearly, if $f$ and $g$ are comonotone on $A,$ then for any real number $t$ either $\lbrace f\ge t\rbrace \subset \lbrace g\ge t\rbrace$ or $\lbrace g\ge t\rbrace \subset \lbrace f\ge t\rbrace.$

 \begin{pr}[Problem $2.31$]
Let $\mu\in\cM_{(X,\cA)}$ and $f,g\in\cF_{(X,\cA)}$ be comonotone functions. Characterize all the semicopulas $\rS$ for which
\begin{align}\label{r1}
\bIs(\mu,f\vee g)=\bIs(\mu,f)\vee \bIs(\mu,g).
\end{align}
\end{pr}

\begin{tw}\label{tw2}
The equality $\eqref{r1}$ holds for any semicopula $\rS\in\fS.$
\end{tw}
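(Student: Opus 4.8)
The plan is to establish the identity levelwise, exploiting the layer-cake form of $\bIs$ together with the nesting of level sets guaranteed by comonotonicity. Since the integrand defining $\bIs(\mu,f\vee g)$ is $\rS\big(t,\mu(\lbrace f\vee g\ge t\rbrace)\big)$, the first thing I would do is rewrite the level set of the maximum as $\lbrace f\vee g\ge t\rbrace=\lbrace f\ge t\rbrace\cup\lbrace g\ge t\rbrace$, which is valid for every $t$.

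Next I would invoke comonotonicity. As already observed in the excerpt, comonotone $f,g$ have nested level sets: for each $t$ either $\lbrace f\ge t\rbrace\subset\lbrace g\ge t\rbrace$ or the reverse inclusion holds. Consequently the union $\lbrace f\ge t\rbrace\cup\lbrace g\ge t\rbrace$ coincides with whichever of the two sets is larger, and since $\mu$ is non-decreasing this yields the key pointwise identity
\[
\mu\big(\lbrace f\vee g\ge t\rbrace\big)=\mu\big(\lbrace f\ge t\rbrace\big)\vee\mu\big(\lbrace g\ge t\rbrace\big).
\]
From here the argument is purely formal. Because $\rS$ is non-decreasing in its second coordinate, any monotone function commutes with the binary maximum, so $\rS(t,\alpha\vee\beta)=\rS(t,\alpha)\vee\rS(t,\beta)$; applying this with $\alpha=\mu(\lbrace f\ge t\rbrace)$ and $\beta=\mu(\lbrace g\ge t\rbrace)$ gives $\rS\big(t,\mu(\lbrace f\vee g\ge t\rbrace)\big)=\rS\big(t,\mu(\lbrace f\ge t\rbrace)\big)\vee\rS\big(t,\mu(\lbrace g\ge t\rbrace)\big)$ for every $t$. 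Finally I would take the supremum over $t\in[0,1]$ and use that the supremum of the pointwise maximum of two families equals the maximum of their separate suprema, which delivers exactly $\bIs(\mu,f\vee g)=\bIs(\mu,f)\vee\bIs(\mu,g)$.

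The proof carries no real obstacle; the only load-bearing step is the measure-of-union identity, and this is precisely where comonotonicity enters — without the nesting of level sets one obtains merely $\mu(\lbrace f\ge t\rbrace\cup\lbrace g\ge t\rbrace)\ge\mu(\lbrace f\ge t\rbrace)\vee\mu(\lbrace g\ge t\rbrace)$, and hence only the inequality $\bIs(\mu,f\vee g)\ge\bIs(\mu,f)\vee\bIs(\mu,g)$. It is worth noting that no structural property of $\rS$ beyond monotonicity in the second variable is used, which explains why the equality holds for every semicopula $\rS\in\fS$ rather than for some restricted subclass; this is the feature that makes the characterization requested in Problem $2.31$ answer itself, in the sense that the admissible semicopulas are all of them.
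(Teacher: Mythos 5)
Your proof is correct and follows essentially the same route as the paper's: the level-set identity $\mu(\lbrace f\vee g\ge t\rbrace)=\mu(\lbrace f\ge t\rbrace)\vee\mu(\lbrace g\ge t\rbrace)$ from comonotonicity, the distribution of $\rS(t,\cdot)$ over the binary maximum via monotonicity in the second coordinate, and the exchange of supremum with maximum. No gaps; your closing remark that only monotonicity of $\rS$ is needed matches the paper's conclusion that the equality holds for every semicopula.
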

\begin{proof}
Since $f,g$ are comonotone,  $\lbrace f\ge t\rbrace \subset\lbrace g\ge t\rbrace$ or $\lbrace g\ge t\rbrace \subset\lbrace f\ge t\rbrace$ for all $t.$ Hence $\mu\big(\lbrace f\vee g\ge t\rbrace \big)=\mu\big(\lbrace f\ge t\rbrace \big)\vee \mu\big(\lbrace g\ge t\rbrace \big)$ and
\begin{align*}
\bIs\big(\mu,f\vee g\big)&=\sup_{t\in [0,1]} \rS\Big(t,\mu\big(\lbrace f\ge t\rbrace \big)\vee \mu\big(\lbrace g\ge t\rbrace \big)\Big)\\&=\sup_{t\in [0,1]} \left\{\rS\Big(t,\mu\big(\lbrace f\ge t\rbrace \big)\Big)\vee \rS\Big(t,\mu\big(\lbrace g\ge t\rbrace \big)\Big)\right\}\\&=\sup_{t\in [0,1]}  \rS\Big(t,\mu\big(\lbrace f\ge t\rbrace \big)\Big)\vee  \sup_{t\in [0,1]} \rS\Big(t,\mu\big(\lbrace g\ge t\rbrace \big)\Big)\\&=\bIs(\mu,f)\vee \bIs(\mu,g).
\end{align*}
The proof is complete.
\end{proof}

\begin{pr}[Problem $2.32$]
Let $\rS\in \fS$ be fixed. To describe all the commuting binary operators with the integral $\bIs$ (under the condition of comonotonicity of functions $f, g$), i.e., to  find all operators $\circ\colon [0,1]^2\to [0,1]$ such that
\begin{align}\label{r2}
\bIs(\mu,f\circ g)=\bIs(\mu,f)\circ \bIs(\mu,g)
\end{align}
for all $f,g\colon X\to [0,1]$ comonotone functions.
\end{pr}

We give an answer to this problem  for some class of operators $\circ$. 
\begin{tw}
Let $\rS\in \fS$ be fixed and let $\circ$ be either a~semicopula or $t$-coseminorm.
The equality $\eqref{r2}$ holds for all comonotone functions $f, g\in\cF_{(X,\cA)}$ iff  $\circ=\vee.$ 
\end{tw}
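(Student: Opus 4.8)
The plan is to treat the two implications separately and to convert the identity $\eqref{r2}$ into an elementary functional equation for $\circ$ by evaluating it on a few explicitly chosen comonotone pairs. The ``if'' direction needs nothing new: $\vee$ is exactly the $t$-coseminorm associated with $\rM$, since $1-\rM(1-x,1-y)=x\vee y$, so with $\circ=\vee$ the equation $\eqref{r2}$ becomes $\bIs(\mu,f\vee g)=\bIs(\mu,f)\vee\bIs(\mu,g)$, which is precisely Theorem~\ref{tw2} and holds for every $\rS\in\fS$ and all comonotone $f,g$.

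For the ``only if'' direction I would first record two evaluations used repeatedly: for a constant, $\bIs(\mu,x)=x$, and for a scaled indicator, $\bIs(\mu,c\mI{A})=\rS(c,\mu(A))$; both follow at once from $\rS(t,1)=t$ and $\rS(t,0)=0$. Then I would test $\eqref{r2}$ on the comonotone pair $f\equiv x$ and $g=y\mI{A}$ (a constant is comonotone with every function). The pointwise value $f\circ g$ equals $x\circ y$ on $A$ and $x\circ 0$ on $X\setminus A$, with $x\circ y\ge x\circ 0$ by monotonicity of $\circ$, so its integral is $(x\circ 0)\vee\rS(x\circ y,\mu(A))$, whereas the right-hand side is $x\circ\rS(y,\mu(A))$. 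Letting the data $(X,\cA,\mu,A)$ vary so that $\mu(A)$ sweeps $[0,1]$ yields the key functional equation
\begin{align*}
(x\circ 0)\vee\rS(x\circ y,b)=x\circ\rS(y,b),\qquad x,y,b\in[0,1].
\end{align*}

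If $\circ$ is a $t$-coseminorm then $x\circ 0=x$ and $x\circ 1=1$. Substituting $y=1$ and using $\rS(1,b)=b$ collapses the equation to $x\vee b=x\circ b$ for all $x,b$, i.e. $\circ=\vee$; together with the ``if'' part this settles the coseminorm case completely.

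The semicopula case is where I expect the main obstacle. There $x\circ 0=0$, so the key equation reads $\rS(x\circ y,b)=x\circ\rS(y,b)$, and $y=1$ forces $\circ=\rS$; hence one must \emph{exclude} the possibility that a semicopula $\circ=\rS$ satisfies $\eqref{r2}$ for all comonotone $f,g$. This is the delicate point, because for idempotent $\rS$ (e.g. $\rS=\rM$) the level-set identity $\{f\wedge g\ge t\}=\{f\ge t\}\cap\{g\ge t\}$ together with comonotonicity makes $\bIs$ behave very symmetrically, so any witness of non-commutation must genuinely exploit that $\rS$ is neither idempotent nor compatible with intersection of level sets. The plan is therefore to feed nested scaled indicators $a\mI{A}$ and $c\mI{B}$ with $A\subset B$ into $\eqref{r2}$, where the pointwise action of $\circ=\rS$ no longer reduces to intersecting level sets, and to extract from the resulting identity a constraint that a semicopula with neutral element $1$ can meet only if $\rS=\vee$ on a nondegenerate region, which is impossible. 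Producing such a witness uniformly in $\rS$, and clarifying the exact role of the hypothesis on $\rS$, is the crux of the argument.
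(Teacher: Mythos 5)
Your ``if'' direction and your functional-equation machinery are sound: $\bIs(\mu,x)=x$ for constants, $\bIs(\mu,c\mI{A})=\rS(c,\mu(A))$, the pair $(x,\,y\mI{A})$ is comonotone, and the identity $(x\circ 0)\vee\rS(x\circ y,b)=x\circ\rS(y,b)$ follows as you say. In the $t$-coseminorm case this gives a complete proof that $\circ=\vee$, and for that half it is in fact tidier than the paper's route, which instead puts $f=g=\mI{A}$ to extract idempotency $x\circ x=x$ and deduces $\circ=\vee$ from that.

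The genuine gap is the semicopula case, which you explicitly leave open, and the plan you sketch for closing it cannot be carried out. You want comonotone witnesses, \emph{uniform in} $\rS$, showing that $\circ=\rS$ violates \eqref{r2}; no such witnesses exist, because for $\rS=\rM$ the semicopula $\circ=\wedge$ genuinely does satisfy \eqref{r2}: comonotonicity makes the level sets nested, so $\mu(\{f\wedge g\ge t\})=\mu(\{f\ge t\})\wedge\mu(\{g\ge t\})$, and then the crossing-point characterization $\bI_{\rM}(\mu,f)=\sup\{t:\mu(\{f\ge t\})\ge t\}$ yields comonotone minitivity, $\bI_{\rM}(\mu,f\wedge g)=\bI_{\rM}(\mu,f)\wedge\bI_{\rM}(\mu,g)$. (Your own proposed test with nested indicators $a\mI{A}$, $c\mI{B}$, $A\subset B$ also collapses to a true identity when $\rS=\rM$.) So under the literal ``fixed $\rS$'' reading the theorem is false at $\rS=\rM$, and the paper's proof in fact establishes the version where $\rS$ is quantified universally: it first derives idempotency from $f=g=\mI{A}$, concluding $\circ=\wedge$ for a semicopula, and then kills $\wedge$ with a single numerical counterexample at $\rS=\Pi$ ($X=\{a,b\}$, $\mu(\{a\})=0.5$, $f(a)=1$, $f(b)=0.4$, $g(a)=0.8$, $g(b)=0.6$, giving $\bI_{\Pi}(\mu,f\wedge g)=0.4<0.5=\bI_{\Pi}(\mu,f)\wedge\bI_{\Pi}(\mu,g)$), phrased as ``not satisfied for all $\rS\in\fS$''. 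If you want to finish along your own lines, the cheapest repair is to add the paper's test $f=g=\mI{A}$: it gives $x\circ x=x$, hence $\circ=\wedge$ for a semicopula; combined with your conclusion $\circ=\rS$ this forces $\rS=\rM$, so for every fixed $\rS\neq\rM$ the semicopula case is excluded outright, and $\rS=\rM$ remains a true exception that can only be removed by the universal-in-$\rS$ reading of the statement (or by an explicit hypothesis $\rS\neq\rM$).
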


\begin{proof}
Put $f=\mI{A}=g$ and $A\in\cF.$ Thus $(f\circ g)(x)$ equals $1,$ if $x\in A$ and $0$ otherwise, which implies that  $\mu\big(\lbrace f\circ g\ge t\rbrace \big)=\mu(A)$ for $t\in (0,1]$ and $\mu\big(\lbrace f\circ g\ge 0\rbrace \big)=1.$  By $\eqref{r2}$
\begin{align*}
 \mu(A)=\mu(A)\circ \mu(A).
\end{align*}
In consequence, we have $x=x\circ x$ for all $x.$ If $\circ$ is a semicopula then $\circ=\wedge.$ Indeed, if $x\le y$, then 
\begin{align*}
 x=x\circ x\le x\circ y\le x\wedge y=x
 \end{align*} 
so $x\circ y=x\wedge y.$ Similar arguments apply to the case of $x>y$.  Obviously, if $\circ$ is a~$t$-coseminorm then $\circ=\vee.$

Now we show that if $\circ=\wedge$, then the equality (\ref{r2}) is not satisfied for all $\rS\in\fS.$ Suppose that $\rS=\Pi$ and $X=\lbrace a,b\rbrace$ with $\mu(\lbrace a\rbrace)=0.5,$ $\mu(\lbrace b\rbrace)=\beta\in [0,1]$ and $\mu(X)=1.$ 
 Furthermore, we assume that $f(a)=1,$ $f(b)=0.4,$ $g(a)=0.8$ and $g(b)=0.6.$ The functions $f,g$ are comonotone on $X.$ An~elementary algebra shows that
\begin{align*}
\bI_{\Pi}(\mu,f)=0.4\vee (1\cdot 0.5)=0.5,\\
\bI_{\Pi}(\mu,g)=0.6\vee  (0.8\cdot 0.5)=0.6,\\
\bI_{\Pi}\big(\mu,(f\wedge g)\big)=0.4\vee (0.8\cdot 0.5)=0.4,
\end{align*}
so $\bI_{\Pi}\big(\mu,(f\wedge g)\big)<\bI_{\Pi}(\mu,f)\wedge \bI_{\Pi}(\mu,g).$ The equality $\eqref{r2}$ holds  for 
$\circ=\vee$, which is a~consequence of  Theorem $\ref{tw2}.$
\end{proof}

%{\bf Acknowledgments} 
%\medskip

%This paper has been partially supported
%by the grant for young researchers from Lodz University of Technology, grant number $501\backslash 17$-2-2-$7154\backslash 2014.$

%\addcontentsline{toc}{section}{Literatura}
\end{document}